%--------------------------------------------------
%	Degeneracy slopes, boundary slopes and exceptional surgery slopes
%	Kazuhiro Ichihara
%--------------------------------------------------

\documentclass{amsart}

\newtheorem{Theorem}{Theorem}

\newtheorem{Lemma}{Lemma}
\newtheorem{Corollary}{Corollary}
\newtheorem{Claim}{Claim}

\theoremstyle{remark}

%--------------------------------------------------
\begin{document}

\title[Degeneracy slopes, $\partial$-slopes and exceptional surgery slopes]{Degeneracy slopes, boundary slopes and exceptional surgery slopes}

\author{Kazuhiro Ichihara}

\address{Department of Mathematics, College of Humanities and Sciences, Nihon University, 3--25--40 Sakurajosui, Setagaya, Tokyo 156--8550, Japan}

\email{ichihara.kazuhiro@nihon-u.ac.jp}

\dedicatory{Dedicated to Professor Yo'av Rieck on his 60th birthday.}

\date{\today}

\keywords{degeneracy slope, boundary slope, exceptional surgery, essential lamination, alternating knot} 

\subjclass[2020]{Primary 57K32; Secondary 57K10}

\begin{abstract}
We give an upper bound on the distance between a degeneracy slope for a very full essential lamination and a boundary slope of an essential surface embedded in a compact, orientable, irreducible, atoroidal 3-manifold with incompressible torus boundary. 
There are three applications: 
(i) We show that a degeneracy slope for a very full essential lamination in the exterior of a prime alternating knot is meridional. 
This gives an affirmative answer to part of a conjecture posed by Gabai and Kazez.
(ii) We obtain two bounds on boundary slopes for a hyperbolic knot in an integral homology sphere, at least one of which always holds: one concerning the denominators of boundary slopes, and the other concerning the differences between boundary slopes. 
This generalizes a result on Montesinos knots obtained by the author and Mizushima.
(iii) We obtain two bounds on exceptional surgery slopes for a hyperbolic knot in an integral homology sphere, at least one of which always holds: one concerning the denominators of such slopes, and the other concerning their range in terms of the genera of the knots. 
Both are actually conjectured by Gordon and Teragaito to always hold for hyperbolic knots in the 3-sphere.
%Furthermore, together with the Gabai's result on degeneracy slopes 
%for fibered knots in the 3-sphere, 
%this establishes an extended version of the conjecture raised by 
%Goda and Teragaito concerning the upper bound of lens surgery slopes 
%in terms of the genera of the knots. 
\end{abstract}

\maketitle

%--------%--------%--------%--------%--------%--------
\section{Introduction}

Let us start with fundamental terminology used in the paper. 
See \cite{Rolfsen} as a basic reference, for example.
As usual, by a \textit{slope}, we mean the isotopy class of a non-trivial unoriented simple closed curve on a torus.
The \textit{distance} between two slopes is defined as 
the minimal geometric intersection number of their representatives. 
(Note that this is not a distance function on the set of slopes.)
We use $\Delta(\gamma, \gamma')$ to denote the distance between the slopes $\gamma$ and $\gamma'$.
When a meridian-longitude system on a torus is fixed, 
slopes on the torus are parametrized by rational numbers (irreducible fractions) with $1/0$ in the usual way, and are identified accordingly.
In the following, when the torus is the boundary of the regular neighborhood of a knot in the 3-sphere, or in an integral homology sphere, we always adopt the standard meridian–preferred longitude system.
In this case, the distance between two slopes corresponding to $p/q$ and $r/s$ is calculated as $\Delta (p/q, r/s) = |ps - qr|$.

%essential surface
Let $F$ be a compact surface, which is always assumed to be connected in this paper, possibly non-orientable or with non-empty boundary, properly embedded in a 3-manifold $M$. 
Then, except when $F$ is a 2-sphere, it is called \emph{essential} if it is incompressible, 
$\partial$-incompressible, and not parallel to a surface on the boundary $\partial M$ of $M$.
%An \emph{essential} 2-sphere embedded in $M$ means that not bounding a 3-ball in $M$. 
%
On a torus boundary of a 3-manifold $M$, a slope is called a \textit{boundary slope} if it is represented by a boundary component of an essential surface properly embedded in $M$.
Throughout this paper, for convenience, we denote by $\chi(F)$ the Euler characteristic of 
a compact, possibly non-orientable, connected surface $F$, and by $|\partial F|$ the number of boundary components of $F$. 
When $F$ is orientable, we denote the genus of $F$ by $g(F)$.

% (Hereafter, in convenience, we set $g(F)$ by $g(F) = ( - \chi(F) - |\partial F| + 2)/2 $. 
% Note that this $g(F)$ is equal to the usual genus of $F$ if $F$ is orientable. )

%essential lamination
A codimension one foliation $\lambda$ of a closed subset of a 3-manifold $M$ is called a \textit{lamination} in $M$.
The complement of a lamination $\lambda$ decomposes into connected components called \emph{complementary regions}. 
A lamination is called \emph{essential} 
if it contains no spherical or Reeb torus leaf and if its complement contains neither essential disks, monogons, nor spheres.
See \cite[Section 5]{GabaiOertel} %Thm 5.3
and \cite[Chapter 6]{CalegariBook}. %Definition 6.14

%def. degeneracy slope
Based on \cite{Brittenham98} and \cite[Definition 8.5]{Gabai}, we extend the definition of a degeneracy slope of an essential lamination in a knot exterior to one in a compact orientable 3-manifold $M$ with torus boundary $T$ as follows. 
See also \cite[Subsection 2.1]{Ni25arxiv}. %[Ni, arXiv:2007.11774v2]. 
Suppose a meridian–longitude system on $\partial M$ is fixed. 
For an essential lamination $\lambda$ in $M$, 
(if possible) the \emph{degeneracy locus} $d(\lambda)$ of $\lambda$ is defined as $n(p,q)$ if there exist $n$ pairwise disjoint, properly embedded, non-parallel annuli in a closed complementary region such that one boundary component lies in a leaf of $\lambda$ and the other is a $p/q$ curve on $\partial M$. 
Furthermore, $n$ is set to be the maximal number of such annuli. 
The rational number $p/q$ is called a \emph{degeneracy slope} of $\lambda$. 
We remark that this definition is slightly different from that in \cite[Chapter 6]{CalegariBook}.

%----------

We recall here three known results on degeneracy slopes and boundary slopes.

One is \cite[Theorem 8.8]{Gabai}, stated as follows. 
Let $K$ be a knot in the 3-sphere $S^3$ and $\lambda$ an essential lamination in the exterior $E(K)$. 
If $d(\lambda) = n(1,0)$, then $1 \le n \le 4g(K) - 2$; otherwise, $d(\lambda) = 1(p,1)$ and $|p| \le 4g(K) - 2$, 
where $g(K)$ denotes the genus of $K$, that is, the genus of the minimal genus Seifert surface of $K$.
Note that if $d(\lambda) = 1(p,1)$, then $\Delta ( p/1 , 0/1) = p$ is the distance between the degeneracy slope and the longitudinal slope, 
that is, the boundary slope of a minimal genus Seifert surface of genus $g(K)$, which is known to be essential.
Thus, in the case $d(\lambda) = 1(p,1)$, the inequality can be rewritten as 
$\Delta(\delta, \beta) \le 4g(K) - 2$ for a degeneracy slope $\delta = p/1$ and the boundary slope $\beta$ of a minimal genus Seifert surface.

%----------

A closed essential surface in a 3-manifold is regarded as an essential lamination with a single compact leaf. 
In this case, the following result is shown in \cite{IchiharaOzawa02}. 
Let $K$ be a knot in $S^3$ with exterior $E(K)$, $F$ an essential surface properly embedded in $E(K)$, and $S$ a closed essential surface in $E(K)$ with an annulus $A$ connecting $S$ and $\partial E(K)$. 
Let $\beta$ denote the boundary slope of $F$ and $\delta$ the slope on $\partial E(K)$ determined by $A$. 
Then, 
$\Delta(\beta, \delta) \leq 2 (-\chi(F)) / |\partial F|$
holds. 
Note that if we regard $S$ as an essential lamination with a single compact leaf, the slope $\delta$ is a degeneracy slope of $S$. 
Also note that if the surface $F$ is orientable and has a single boundary component, then we have 
$2 (-\chi(F))/|\partial F| = 4 g(F) - 2$.

%----------
On the other hand, as with train tracks, laminations can be carried by branched surfaces, which allows one to study them combinatorially.
In the proof of \cite[Theorem 2.5]{Wu}, the following is essentially shown. 
Suppose $M$ is a hyperbolic manifold with torus boundary $T$. 
Let $B$ be an essential branched surface in $M$ with 
a regular neighborhood $N(B)$. (See the next section for details.)
Suppose that $M - \mathrm{Int}\,N(B)$ contains an essential annulus $A$ with one boundary on $\partial_h N(B)$ and the other a curve on $T$ of slope $\delta$.
Then a boundary slope $\beta$ of an essential surface of genus one (punctured torus) satisfies $\Delta(\delta, \beta) \le 2$.
Note that the slope $\delta$ corresponds to a degeneracy slope of an essential lamination if $B$ carries a (necessarily essential) lamination. %corresponds to a degeneracy slope of an essential lamination carried by $B$.

%----------

Our following theorem is a partial extension of the three results above. 
It may be known to some experts, but as far as the author knows, there is no explicit statement in the literature.

\begin{Theorem}\label{Thm1}
Let $M$ be a compact, orientable, irreducible $3$-manifold with an incompressible torus boundary $\partial M$. 
For a boundary slope $\beta$ on $\partial M$ of an essential surface $F$ embedded in $M$, and a degeneracy slope $\delta$ on $\partial M$ of a very full essential lamination in $M$, the inequality $\Delta ( \beta , \delta ) \leq 2 (- \chi(F))/|\partial F|$ holds. 
In particular, if $F$ is orientable, then $\Delta ( \beta , \delta ) \leq 4 g(F) - 2$ holds.
\end{Theorem}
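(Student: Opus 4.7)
The plan is to adapt the argument of Ichihara--Ozawa \cite{IchiharaOzawa02}, which treats the special case of a closed essential surface $S$ --- a degenerate very full essential lamination with a single compact leaf --- connected to $\partial M$ by a single annulus $A$. In that setting one puts $F$ into general position with $A$, removes trivial intersection components, counts the remaining essential arcs of $F \cap A$, and bounds them by $-2\chi(F)$. I would carry out the same steps using the degeneracy annuli of the very full essential lamination $\lambda$. Let $A_1, \ldots, A_n$ be the pairwise disjoint non-parallel annuli in a closed complementary region of $\lambda$ defining the degeneracy locus, with $\partial A_i = c_i \cup c_i'$ where $c_i \subset \partial M$ is a $\delta$-curve and $c_i'$ lies on a leaf of $\lambda$. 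It suffices to work with a single annulus $A = A_1$ with $\partial A = c \cup c'$.

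First I would isotope $F$ so that $F$ is transverse to $A$, so that $|\partial F \cap c|$ realizes its geometric minimum $|\partial F|\,\Delta(\beta, \delta)$ on $\partial M$, and so that, subject to these two conditions, the number of components of $F \cap A$ is minimal. Standard innermost-disk and outermost-arc arguments then clean up $F \cap A$: simple-closed-curve components are eliminated using incompressibility of $F$ and of the leaves of $\lambda$; arcs that are $\partial$-parallel in $A$ with both endpoints on $c$ are eliminated using $\partial$-incompressibility of $F$; and arcs that are $\partial$-parallel in $A$ with both endpoints on $c'$ are eliminated via the no-monogon condition in complementary regions of an essential lamination. After this cleanup, every component of $F \cap A$ is an essential arc of $A$ running from $c$ to $c'$, so each has one endpoint on $\partial F$ at a point of $\partial F \cap c$ and one endpoint in the interior of $F$ at a point of $F \cap c'$. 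The number $a$ of such arcs therefore satisfies $a = |\partial F \cap c| = |\partial F|\,\Delta(\beta, \delta)$.

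The main obstacle is then the upper bound $a \le -2\chi(F)$. Order the arcs cyclically as $\alpha_1, \ldots, \alpha_a$ along $A$, so that $A \setminus \bigcup \alpha_i$ decomposes into $a$ rectangular strips $R_1, \ldots, R_a$, each bounded by two consecutive $\alpha_i$'s together with arcs in $c$ and $c'$. The plan is to use the minimality of $|F \cap A|$ to rule out the possibility that two consecutive arcs $\alpha_i$, $\alpha_{i+1}$ cobound a rectangle in $F$: such a rectangle would glue to $R_i$ along $\alpha_i \cup \alpha_{i+1}$ to yield a disk in the closed complementary region of $\lambda$ whose boundary lies in $\partial M \cup c'$, and \emph{very-fullness} of $\lambda$ (which forces the closed complementary region to be an $I$-bundle admitting no such reducing disk) then supplies an isotopy of $F$ reducing $|F \cap A|$, contradicting minimality. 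From this non-adjacent parallelism, together with a direct Euler characteristic count on $F$ viewed as cell-decomposed by the arcs $\alpha_i$ and their endpoints on $\partial F \cap c$ and on $F \cap c'$, one deduces $a \le -2\chi(F)$.

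Combining the two bounds yields $|\partial F|\,\Delta(\beta, \delta) = a \le -2\chi(F)$, which is the desired inequality. The orientable special case $\Delta(\beta, \delta) \le 4g(F) - 2$ then follows from $-\chi(F) = 2g(F) + |\partial F| - 2$ together with the elementary inequality $(g(F) - 1)(|\partial F| - 1) \ge 0$, valid whenever $F$ has positive genus and non-empty boundary.
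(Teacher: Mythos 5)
Your overall plan diverges from the paper's proof precisely at the point where the real difficulty sits, and the divergence creates a genuine gap. The reduction to a single degeneracy annulus $A$ and the identification $a = |\partial F \cap c| = |\partial F|\,\Delta(\beta,\delta)$ for the essential arcs of $F \cap A$ are fine, but the claimed upper bound $a \le -2\chi(F)$ does not follow from the data you retain. Each essential arc $\alpha_i$ of $F \cap A$ has one endpoint on $\partial F$ and the other at a point of $F \cap c'$ lying in the \emph{interior} of $F$, since $c'$ sits on a leaf of $\lambda$ rather than on $\partial M$. A disjoint collection of such arcs carries no Euler characteristic information by itself: a disk already contains arbitrarily many disjoint, pairwise non-parallel arcs with one endpoint on its boundary and one in its interior, so the arcs $\alpha_i$ do not cell-decompose $F$ and no count based on them alone can bound $a$ by $-2\chi(F)$. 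In the Ichihara--Ozawa argument you are adapting, the interior endpoints of the arcs of $F \cap A$ land on the finite collection of circles $F \cap S$, and it is the combined graph $F \cap (S \cup A)$ on $F$ that drives the Euler characteristic count. For a genuine very full lamination the analogue of $F \cap S$ is $F \cap \lambda$, a one-dimensional lamination on $F$ with non-compact (and typically uncountably many) leaves; it is not a finite graph and cannot be fed into a naive count. Your "non-adjacent-parallelism" observation, even granted, does not repair this.

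This is exactly the obstacle the paper's proof is designed to circumvent: it replaces $\lambda$ by an essential branched surface $B$ fully carrying it (Gabai--Oertel), applies Wu's splitting lemma (Lemma~\ref{Lem_Wu}) to arrange that $F \cap B$ is an essential train track $\tau$ on $F$, and then measures complexity by the \emph{cusped} Euler characteristic $\chi_c$ together with its additivity over the complementary regions of $\tau$. The role your rectangle argument is meant to play is taken by the paper's second Claim, which shows that each of the at least $\Delta$ rectangles between consecutive essential arcs of $S \cap A$ (where $S$ is the annulus component of $F \cap X'$ meeting $\partial F$) must contain a cusp of $\tau$; the geometric input there is the uniqueness of the degeneracy slope forced by very-fullness, not an $I$-bundle structure on the complementary region (which is a one-holed ideal polygon bundle over $S^1$, not an $I$-bundle as you assert). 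Without some substitute for the train track and the cusp count, your final step cannot be completed. As a minor shared caveat, the passage from $2(-\chi(F))/|\partial F|$ to $4g(F)-2$ in both your write-up and the paper requires $g(F) \ge 1$.
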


%----------

Here, an essential lamination in a $3$-manifold is called \textit{very full} if every complementary region is homeomorphic to one of the following two types:
(i) an ideal polygon bundle over the circle $S^1$, where the polygon has at least two ideal vertices; or 
(ii) a one-holed ideal polygon bundle over $S^1$, where the polygon has at least one ideal vertex. 
See \cite[Definition 6.42]{CalegariBook} for more details.
It is proved by Gabai and Mosher that any compact, orientable, irreducible, atoroidal $3$-manifold with incompressible torus boundary, in particular, a hyperbolic knot complement, contains a very full lamination.
See \cite[Theorem A]{GabaiMosher} and \cite[Theorem 6.47 (Gabai–Mosher)]{CalegariBook}.

% Theorem 6.47. (Gabai–Mosher) Let M be a compact, oriented, irreducible, atoroidal,
% torally bounded 3-manifold such that the rank of H2(M, ∂M; Z) is positive. Then M
% has a pA flow, and if ∂M= ∅then M has a pseudo-Anosov flow. In particular, M
% admits a very full lamination.

We have three applications of the theorem as follows.

%----------
\subsection{}

%def of alternating knots
A knot in $S^3$ is called \emph{alternating} if it admits a diagram in which over- and under-crossings alternate along the knot.  
We recall two results on degeneracy slopes of essential laminations in the exteriors of alternating knots.

It follows from \cite[Theorem 2 (Meridian Lemma)]{Menasco} and \cite[Theorem 1]{IchiharaOzawa00} that if $K$ is a prime alternating knot having a closed incompressible surface $S$ in its exterior $E(K)$, then there always exist embedded annuli in $E(K)$ connecting $S$ to $\partial E(K)$, and the slope on $\partial E(K)$ determined by the annuli is meridional.  
This means that, when we regard $S$ as an essential lamination with a single compact leaf, the degeneracy slope must be meridional.

For 2-bridge knots, which are well known to be alternating knots, the following was shown in \cite[Theorem 8]{GabaiKazez}.  
Let $K$ be a fibered 2-bridge knot in $S^3$.  
The stable lamination transverse to the fibration is known to be very full.  
Its degeneracy locus is then of the form $2(m - 1)\,(1, 0)$, where the positive integer $m$ is determined by the Murasugi sum decomposition of the fiber surface.  
In particular, the degeneracy slope is meridional.

It was raised as a conjecture in \cite[Conjecture 13]{GabaiKazez} that if $K$ is a non-torus fibered alternating knot, then $d(\lambda) = n(1, 0)$ holds for the stable lamination $\lambda$ transverse to the fibration and some integer $n > 1$.

The following gives a partial affirmative answer to the conjecture.

\begin{Theorem}[Degeneracy slopes for alternating knots]\label{Thm2}
Every degeneracy slope of a very full essential lamination in the exterior of a nontrivial prime alternating knot in the 3-sphere is meridional.
\end{Theorem}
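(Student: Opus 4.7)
The plan is to extend Menasco's Meridian Lemma \cite{Menasco} from closed essential surfaces to very full essential laminations, working with an essential branched surface that carries the lamination together with the combinatorics of a reduced alternating projection of $K$.

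First, I would fix a very full essential lamination $\lambda$ in $E(K)$ having a degeneracy slope $\delta$. By the very full hypothesis, there is a complementary region $R$ homeomorphic to a one-holed ideal polygon bundle over $S^1$, which contains a properly embedded degeneracy annulus $A$ with one boundary on a leaf of $\lambda$ and the other a $\delta$-curve on $\partial E(K)$. Next, I would choose an essential branched surface $B$ in $E(K)$ fully carrying $\lambda$, so that $\partial_h N(B)$ is incompressible and end-incompressible in $E(K) - \mathrm{Int}\,N(B)$, and so that $A$ is a properly embedded essential annulus in this complement with one boundary on $\partial_h N(B)$ and the other on $\partial E(K)$ of slope $\delta$.

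The heart of the argument is to run Menasco's combinatorial analysis on $\partial_h N(B)$ in place of a single closed essential surface. Place $K$ in a reduced prime alternating projection on a $2$-sphere $P$ with the standard crossing balls at each crossing, then isotope $B$ so that $\partial_h N(B)$ meets $P$ efficiently. Innermost-disk and outermost-arc analysis of $\partial_h N(B) \cap P$, together with the essentiality of $B$ and the one-holed polygon bundle structure of $R$, should produce a saddle disk in some crossing ball whose boundary lies on $\partial_h N(B)$ and which intersects $\partial E(K)$ in a meridional arc. Extending this disk through the product structure of $R$ then yields a degeneracy annulus of meridional slope, which forces $\delta$ itself to be meridional.

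The main obstacle is this combinatorial step. Menasco's original argument is formulated for a single closed incompressible surface in minimal position with respect to $P$, whereas $\partial_h N(B)$ carries corners along the branch locus and individual leaves of $\lambda$ may be non-compact, so the usual innermost-disk and outermost-arc arguments must be carefully reformulated in the branched setting. I expect the very full hypothesis to be exactly the property needed to rule out the degenerate complementary regions (such as product $I$-bundles or solid tori without sufficient ideal vertices) that would otherwise obstruct the saddle argument, playing a role analogous to incompressibility in the original Meridian Lemma.
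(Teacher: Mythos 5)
Your plan is a proof outline rather than a proof: the entire argument hinges on the step you yourself flag as ``the main obstacle,'' namely rerunning Menasco's innermost-disk/outermost-arc analysis on $\partial_h N(B)$ to produce a meridional saddle disk, and that step is never carried out. It is not a routine adaptation. Menasco's Meridian Lemma is proved for a single \emph{closed} incompressible surface put in minimal position with respect to the projection sphere with crossing balls; minimality and the subsequent counting arguments use compactness of the surface and the absence of corners in an essential way. For a very full lamination the leaves are typically non-compact, $\partial_h N(B)$ is a surface with cusps along $\partial_v N(B)$ whose components need not be incompressible in $E(K)$ (only in the complement of $N(B)$), and there is no a priori minimal position of $B$ with respect to the projection sphere from which to launch an innermost-disk argument. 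Saying that the very full hypothesis ``should'' rule out the degenerate configurations is a hope, not an argument; as written, the proposal proves nothing beyond the (known) closed-surface case you quote at the outset.

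For comparison, the paper's proof avoids Menasco-style combinatorics on the lamination entirely. It assumes $\delta$ is non-meridional, notes that $\delta$ is then integral by Wu's theorem (since meridional surgery yields $S^3$), and applies Theorem~\ref{Thm1} to the two checkerboard surfaces $F_1,F_2$ of a reduced alternating diagram, which are essential with connected boundary and integral boundary slopes $r_1,r_2$ satisfying $|r_1-r_2| = 2\,\mathrm{cr}(K)$ and $\chi(F_1)+\chi(F_2) = 2-\mathrm{cr}(K)$. The triangle inequality through $\delta$ gives $|r_1-r_2| \le 2\bigl((-\chi(F_1))+(-\chi(F_2))\bigr)$, which is $4$ less than the actual value of $|r_1-r_2|$, a contradiction. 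If you want to salvage your approach, you would need to either supply the full branched-surface version of the Meridian Lemma (a substantial project in its own right) or switch to an argument, like the paper's, that only requires Theorem~\ref{Thm1} together with explicitly known essential surfaces.
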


The Gabai’s conjecture that degeneracy slopes of fibered alternating knots are meridional was recently proved in \cite[Corollary 6.18]{BoyerGordonHu24}, by using $\mathrm{Homeo}_+(S^1)$-representations of link groups. 
This theorem provides an alternative proof. 
See \cite{BoyerGordonHu24} for details.

Also, it implies the following corollary.

\begin{Corollary}\label{Cor-veering}
For every alternating fibered knot, the monodromy of the fibration is neither right-veering nor left-veering.     
\end{Corollary}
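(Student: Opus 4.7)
The plan is to derive Corollary \ref{Cor-veering} from Theorem \ref{Thm2} by suspending the stable foliation of the monodromy and translating the resulting slope constraint into a vanishing statement for the fractional Dehn twist coefficient.

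Let $K$ be an alternating fibered knot with fiber $F$ and monodromy $\phi\colon F \to F$, and assume first that $\phi$ is pseudo-Anosov (the generic case among non-torus alternating fibered knots). The stable measured foliation of $\phi$ on $F$ suspends to an essential lamination $\Lambda^{s}$ in the mapping torus $E(K)$, which is very full by the Gabai--Mosher construction cited in the excerpt. Applying Theorem \ref{Thm2} to $\Lambda^{s}$ forces its degeneracy slope $\delta$ on $\partial E(K)$ to be meridional.

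Next, I would invoke the identification of $\delta$ with the fractional Dehn twist coefficient $c(\phi)$. The boundary leaves of $\Lambda^{s}$ wind around $\partial E(K)$ at a rate dictated by the rotation that $\phi$ induces on the singular prongs of the stable foliation along $\partial F$, and this rotation is exactly $c(\phi)$. In the standard meridian--longitude basis, meridionality of $\delta$ is therefore equivalent to $c(\phi) = 0$. By the Honda--Kazez--Mati\'c characterization, a pseudo-Anosov $\phi$ is right-veering exactly when $c(\phi) > 0$ and left-veering exactly when $c(\phi) < 0$; hence $c(\phi) = 0$ rules out both. The remaining periodic monodromy case concerns the $(2,q)$ torus knots and should be addressed by direct inspection using the explicit description of $\phi$, since in that setting the very full lamination arises via a Murasugi-sum construction (as in the Gabai--Kazez computation quoted earlier) and does not admit the same clean dictionary with $c(\phi)$.

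The main obstacle is the identification $\delta \leftrightarrow c(\phi)$ in the pseudo-Anosov case. While this is morally clear---both quantities encode the boundary twisting of $\phi$---a precise proof requires tracking the prong combinatorics of the stable foliation along $\partial F$ through the Gabai--Mosher splitting into a very full lamination, and it is this bookkeeping, rather than any genuinely new idea, that would constitute the technical core of the argument.
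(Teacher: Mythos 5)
Your proposal follows essentially the same route as the paper: apply Theorem~\ref{Thm2} to the (very full) stable lamination to conclude the degeneracy slope is meridional, then invoke the Honda--Kazez--Mati\'c criterion to rule out right- and left-veering. The ``technical core'' you flag --- identifying the degeneracy slope with the fractional Dehn twist coefficient --- is exactly the content of the cited \cite[Proposition 3.1]{HondaKazezMatic} (right-veering iff $1/d>0$, left-veering iff $1/d<0$), so the paper simply quotes it rather than re-deriving the prong bookkeeping, and like you it treats only the hyperbolic (pseudo-Anosov) case explicitly.
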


\begin{proof}
For a hyperbolic alternating fibered knot $K$, the stable lamination $\lambda$ transverse to the fibration is known to be very full.  
Let $d$ be the degeneracy slope for $\lambda$.  
Then, by \cite[Proposition 3.1]{HondaKazezMatic},  
the monodromy of the fibration is right-veering (resp. left-veering) if and only if $1/d > 0$ (resp. $1/d < 0$).  
Thus, the corollary follows from the theorem above.  
\end{proof}

This result was recently shown in \cite{BaldwinNiSivek} by using knot Floer homology. 
The corollary above provides an alternative proof. 
See \cite{BaldwinNiSivek} for details.

% L-space conjecture

% left-orderable surgery

% alternating knot admits no L-space surgery

% For a non-torus alternating knot, any non-trivial Dehn surgery is expected to yield a 3-manifold with a left-orderable fundamental group. 

% \begin{Corollary}
% Every nontrivial surgery on a non-torus alternating fibered knot in $S^3$ is left-orderable, that is, the fundamental group of the obtained manifold is left-orderable.    
% \end{Corollary}

% \begin{proof}

% \cite{Menasco}
% alternating non-hyperbolic are $(2,p)$-torus or composite

% hyperbolic,
% \cite[Theorem 1.1]{Zung}

% composite
% \cite[Corollary 3.11]{BoyerGordonHu}
% All rational surgeries on a composite knot are LO (left-orderable). %and N L S. (not a Heegaard Floer L-space.)

% \end{proof}

%----------
\subsection{}

%(difference)
Based on Hatcher-Thurston \cite{HatcherThurston} and 
Hatcher-Oertel \cite{HatcherOertel}, the following results were shown in \cite[Theorem 1.4]{IchiharaMizushima} about boundary slopes for Montesinos knots in $S^3$. 
(See \cite{IchiharaMizushima} for details.)
Let $K$ be a Montesinos knot of length at least three.
(i) 
Let $F$ be an essential surface of genus $g \ge 2$ properly embedded in the exterior of $K$ with boundary slope $p/q$. 
Then, $|q| \le (- \chi(F)) / |\partial F| + 1$ holds. 
In particular, if $F$ is orientable, then 
$q \le \max \{ g+1, 2g - 1 \}$ holds (\cite[Theorem 1.1]{IchiharaMizushima}).
(ii) 
Let $F_i$ be essential surfaces of genus $g_i \ge 2$ properly embedded in the exterior of $K$ with non-meridional boundary slopes $r_i$ for $i = 1, 2$, respectively. 
Then, $|r_1 - r_2| \le 4 ( g_1 + g_2 )$ holds 
(\cite[Theorem 1.4]{IchiharaMizushima}).

The next theorem shows that one of the results above always holds in general.

\begin{Theorem}[Bounds on boundary slopes]\label{Thm3}
Let $K$ be a hyperbolic knot in an integral homology sphere with exterior $E(K)$.  
Then one of the following must hold:
(i) For any essential orientable surface $F$ embedded in $E(K)$ with non-meridional boundary slope $p/q$, $|q| \le 2 g(F)$ holds.
(ii) For any pair of orientable essential surfaces $F$ and $F'$ embedded in $E(K)$ with non-meridional boundary slopes $r$ and $r'$, respectively, $|r - r'| \le 4 (g(F) + g(F') - 1)$ holds.
\end{Theorem}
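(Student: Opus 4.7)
The plan is to combine the Gabai--Mosher existence theorem for very full essential laminations with Theorem \ref{Thm1}, and then dichotomize according to whether the resulting degeneracy slope is meridional. Since $K$ is hyperbolic in an integral homology sphere, the exterior $E(K)$ is compact, orientable, irreducible, atoroidal, and has incompressible torus boundary, so by Gabai--Mosher it admits a very full essential lamination $\lambda$. Let $\delta$ be its degeneracy slope. Applying Theorem \ref{Thm1} to any essential surface $F \subset E(K)$ with boundary slope $\beta$ yields $\Delta(\beta,\delta) \le 2(-\chi(F))/|\partial F|$, and in particular $\Delta(\beta,\delta) \le 4g(F) - 2$ when $F$ is orientable.

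Suppose first that $\delta$ is the meridional slope $1/0$; I would aim to derive (i). For an essential surface $F$ with non-meridional boundary slope $p/q$ one has $\Delta(p/q,1/0) = |q|$, so $|q| \le 2(-\chi(F))/|\partial F|$. A case analysis on $b = |\partial F|$ then gives $|q| \le 2g(F)$. A standard homological argument in the integral homology sphere shows that an orientable essential surface with non-longitudinal, non-meridional boundary slope must have $b$ even, hence $b \ge 2$; and for orientable $F$ with $b \ge 2$ the inequality $2(-\chi(F))/b \le 2g(F)$ simplifies, after clearing denominators, to $(b-2)(g(F)-1) \ge 0$, which holds as long as $g(F) \ge 1$. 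The only remaining possibility, $b = 1$, forces the slope to be longitudinal with $|q| = 1$, and there $g(F) \ge g(K) \ge 1$ because $K$ is hyperbolic.

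Suppose instead that $\delta$ is non-meridional; I would aim to derive (ii). Then for any two orientable essential surfaces $F, F'$ with non-meridional slopes $r, r'$, all three slopes $r, r', \delta$ have nonzero denominators and may be regarded as real numbers. The triangle inequality on $\mathbb{R}$ yields $|r - r'| \le |r - \delta| + |r' - \delta|$, and writing each slope as an irreducible fraction gives $|r - \delta| = \Delta(r,\delta)/|q_r q_\delta| \le \Delta(r,\delta)$ since $q_r q_\delta$ is a nonzero integer. Combining with Theorem \ref{Thm1} applied separately to $F$ and $F'$ then gives
\[
|r - r'| \le \Delta(r,\delta) + \Delta(r',\delta) \le (4g(F)-2) + (4g(F')-2) = 4(g(F) + g(F') - 1),
\]
which is (ii).

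The algebra is short, so the main issue is structural rather than computational: the dichotomy on $\delta$ is what powers the whole argument, and the two cases do not interact. The subtler point lies in (i), where one must use the parity of $|\partial F|$ forced by the integral homology sphere hypothesis together with $g(F) \ge g(K) \ge 1$ from the hyperbolicity of $K$ to pass from the sharper $2(-\chi(F))/|\partial F|$ bound of Theorem \ref{Thm1} to the cleaner $2g(F)$ bound stated in (i); the case $b = 1$ (longitudinal) and the case $b \ge 2$ (even) must be handled separately.
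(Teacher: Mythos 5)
Your proposal is correct and follows essentially the same route as the paper: produce a very full essential lamination via Gabai--Mosher, dichotomize on whether its degeneracy slope $\delta$ is meridional, and in each case apply Theorem~\ref{Thm1} (directly for (i), via the triangle inequality $|r-r'|\le\Delta(r,\delta)+\Delta(\delta,r')$ for (ii)). The only difference is that you spell out the homological parity argument behind $|\partial F|\ge 2$ and the longitudinal $b=1$ case, which the paper asserts in one line.
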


See the following for related results: \cite[Theorem 2]{GordonLuecke1}, \cite[Theorem 2, 3]{Torisu}, \cite[Theorem 2]{MenascoThistlethwaite}, and \cite[Theorem 1.2]{MatignonSayari} about bounds on denominators of boundary slopes for knots, and also \cite[Theorem 5.2]{Rieck},  \cite[Theorem 1]{Torisu}, and \cite[Corollary 4.4]{ichihara2025euclideanlengthscullershalennorms} about bounds on the distances between boundary slopes for knots.

%----------
\subsection{}

%Dehn surgery

Finally, we give an application to the study of exceptional surgeries on hyperbolic knots.  
See, for example, \cite{Boyer} for a survey. 

The following operation to create a 3-manifold from a given one and a given knot is called a \textit{Dehn surgery} on the knot:  
Remove an open tubular neighborhood of the knot,  
and glue a solid torus back.
Consider the slope on the peripheral torus of a knot $K$  which is represented by the curve identified with the meridian of the attached solid torus via the surgery.  
Then we can see that Dehn surgery on $K$ is characterized by this slope, which we call the \textit{surgery slope}.  
In the following, the 3-manifold obtained by Dehn surgery on a knot $K$ in $S^3$ along the slope $r$ is denoted by $K(r)$.

%exceptional surgery

The well-known Hyperbolic Dehn Surgery Theorem due to Thurston \cite[Theorem 5.8.2]{Thurston1} states that each hyperbolic knot (i.e., a knot with hyperbolic complement) admits only finitely many Dehn surgeries yielding non-hyperbolic manifolds.  
In view of this, such finitely many exceptions are called \textit{exceptional surgeries}.

Regarding exceptional surgeries on knots in $S^3$, there are two conjectures, both are still open. 
One is about the denominators of exceptional surgery slopes: Every non-trivial exceptional surgery slope $p/q$ for a hyperbolic knot in $S^3$ satisfies $|q| \le 2$.  
See \cite[Problem 1.77 A(3)]{Kirbylist} for more explanation.
Another states: Every non-trivial exceptional surgery slope $r$ for a hyperbolic knot in $S^3$ satisfies $|r| \le 4 g(K)$.

The next theorem shows that one of the conjectures holds for each hyperbolic knot in an integral homology sphere. 

\begin{Theorem}[Bounds on exceptional surgery slopes]\label{Thm4}%exceptional surgery
Let $p/q$ be a non-trivial exceptional surgery slope for a hyperbolic knot in an integral homology sphere.  
Then at least one of the following always holds: (i) $|q| \le 2$, or (ii) $| p/q - r | \le 4 g(F)$ for every orientable essential surface $F$ with non-meridional boundary slope $r$, in particular, $|p/q| \le 4 g(K)$. 
\end{Theorem}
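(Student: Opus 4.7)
The plan is to combine Theorem~\ref{Thm1} with the observation that a non-trivial exceptional surgery slope cannot lie far from the degeneracy slope of a very full essential lamination. Since $K$ is hyperbolic in an integral homology sphere, its exterior $E(K)$ is compact, orientable, irreducible, atoroidal, with incompressible torus boundary, so by the Gabai--Mosher theorem cited after Theorem~\ref{Thm1}, $E(K)$ carries a very full essential lamination $\lambda$; let $\delta$ be its degeneracy slope on $\partial E(K)$.

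The main step, which I expect to be the principal obstacle, is to establish the bound $\Delta(p/q,\delta) \leq 2$ for every non-trivial exceptional surgery slope $p/q$. The intended mechanism is the persistence of very full essential laminations under Dehn filling: results of Gabai--Oertel and their elaborations by Brittenham, Gabai--Mosher, Wu, and others show that $\lambda$ should induce an essential lamination in $K(p/q)$ when the filling slope is not too close to $\delta$, and the very-full structure is strong enough that, combined with geometrization, the filled manifold $K(p/q)$ would then be forced to be hyperbolic, contradicting $p/q$ being exceptional. An alternative route, if the lamination-persistence estimate is not readily cited with the tight constant, is a case analysis on the type of exceptional surgery: for a toroidal filling one extracts a once-punctured torus in $E(K)$ with boundary slope $p/q$ and applies Theorem~\ref{Thm1} with $g(F)=1$ to get $\Delta(p/q,\delta)\leq 4\cdot 1 - 2 = 2$ directly; for a reducible filling one uses a planar essential surface (genus $0$) with boundary slope $p/q$; and in the remaining (finite, cyclic, Seifert-fibered) cases the CGLS--Boyer--Zhang bounds force $|q|\leq 2$, which already implies $\Delta(p/q,\delta)\leq 2$ regardless of $\delta$. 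Locating and stating precisely the right form of these inputs is the delicate part.

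Once $\Delta(p/q,\delta) \leq 2$ is in hand, the conclusion follows from a short case split on $\delta$. If $\delta$ is meridional, then $\Delta(p/q,\delta) = |q| \leq 2$, giving (i). Otherwise, write $\delta = a/b$ with $|b|\geq 1$; for any essential orientable surface $F$ with non-meridional boundary slope $r = p'/q'$ (so $|q'|\geq 1$), Theorem~\ref{Thm1} yields $\Delta(\delta, r) \leq 4g(F) - 2$, and using the identity $|s-s'| = \Delta(s,s')/|qq'|$ together with the ordinary triangle inequality
\[
|p/q - r| \leq |p/q - \delta| + |\delta - r| = \frac{\Delta(p/q,\delta)}{|qb|} + \frac{\Delta(\delta,r)}{|bq'|} \leq 2 + (4g(F) - 2) = 4g(F),
\]
which is (ii). Specializing $F$ to a minimal genus Seifert surface of $K$ (genus $g(K)$, boundary slope $0$) yields the ``in particular'' statement $|p/q| \leq 4g(K)$, so in every case at least one of (i) and (ii) holds.
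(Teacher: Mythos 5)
Your proposal follows essentially the same route as the paper: the key bound $\Delta(p/q,\delta)\le 2$ is obtained exactly by your primary mechanism, namely the persistence of the (very full essential) lamination under Dehn filling as in Wu's Theorem~2.5 combined with Geometrization, and the subsequent case split on whether $\delta$ is meridional together with the triangle inequality and Theorem~\ref{Thm1} is identical to the paper's argument. One caution about your fallback route only: the claim that $|q|\le 2$ is forced in the Seifert-fibered case is itself the open conjecture, so that alternative would not close all cases --- but your main route does not need it.
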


Note that it is not shown that at least one of the above statements always holds, i.e., it could be that for some knots the first statement holds, and for some knots the second, but no statement holds for all knots. 

Similar results were obtained in \cite[Theorem 1.1]{Ni25arxiv} and \cite[Corollary 1.12]{BaldwinNiSivek} for hyperbolic fibered knots in $S^3$.

By virtue of the Geometrization of 3-manifolds  
established in \cite{Perelman1, Perelman2, Perelman3}, exceptional surgeries are classified into those yielding reducible, toroidal, or (small) Seifert fibered manifolds.  
Among these, surgeries yielding spherical manifolds, that is, 3-manifolds with finite fundamental groups, in particular lens spaces, have been extensively studied.  
For these, we have the following.  

\begin{Corollary}\label{Cor_CycGenus}
Let $K$ be a hyperbolic knot in $S^3$ and  
$b$ a non-meridional boundary slope of an orientable essential surface $F$ embedded in the exterior $E(K)$ of $K$. 
Suppose that the degeneracy locus of a very full essential lamination in $E(K)$ is not $1(1,0)$. 
Then, if the 3-manifold $K(r)$ obtained by $r$-surgery on $K$ for a non-meridional slope $r$ is a lens space or $\pi_1 ( K(r) )$ is finite, then $| r - b | \leq 4 g(F) - 1$, in particular, $| r | \leq 4 g(K) - 1$.  
\end{Corollary}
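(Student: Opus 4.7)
The plan is to sharpen the bound from Theorem \ref{Thm4}(ii) by one unit using the stronger geometric input that $K(r)$ has finite fundamental group. Such manifolds admit no essential lamination at all, which forces the surgery slope to be at $\Delta$-distance at most $1$ (rather than $2$) from the degeneracy slope of any very full essential lamination in $E(K)$.

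First, $K(r)$ carries no essential lamination: its universal cover is $S^3$ (since $\pi_1(K(r))$ is finite), and $S^3$ admits no essential lamination by the classical result of Gabai--Oertel; any essential lamination downstairs would lift upstairs. On the other hand, since $E(K)$ is hyperbolic, the Gabai--Mosher theorem cited in the excerpt gives a very full essential lamination $\lambda$ in $E(K)$ with some degeneracy slope $\delta$. Combined with the persistence principle for essential laminations under Dehn filling---namely, that $\lambda$ extends to an essential lamination in $E(K)(\alpha)$ whenever $\Delta(\alpha, \delta) \ge 2$---the absence of essential laminations in $K(r)$ forces $\Delta(r, \delta) \le 1$.

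Applying Theorem \ref{Thm1} to $F$ and $\lambda$ yields $\Delta(b, \delta) \le 2(-\chi(F))/|\partial F|$, which for an orientable $F$ with a single boundary component is $4g(F) - 2$. Using the elementary fact that $|p/q - u/v| = \Delta(p/q,u/v)/(qv) \le \Delta(p/q, u/v)$ for positive denominators, we convert these to $|r - \delta| \le 1$ and $|b - \delta| \le 4g(F) - 2$; the real-number triangle inequality then gives $|r - b| \le 1 + (4g(F) - 2) = 4g(F) - 1$. Specializing to $F$ a minimal-genus Seifert surface (with $b = 0$ and $g(F) = g(K)$) yields the ``in particular'' statement $|r| \le 4g(K) - 1$.

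The main obstacle is the borderline case in which $\delta$ is the meridional slope $1/0$: there $|r - \delta|$ and $|b - \delta|$ are infinite and the real-number triangle inequality collapses. In this case $\Delta(r, \delta) \le 1$ only recovers that $r$ is integral (already implied by the Culler--Gordon--Luecke--Shalen cyclic surgery theorem), and Theorem \ref{Thm1} merely bounds the denominator of $b$. One must then either work throughout in terms of $\Delta$ via a Pl\"ucker-type identity for primitive slope vectors---which however degenerates precisely when $\delta$ is meridional---or invoke extra flexibility in the Gabai--Mosher construction to select a very full lamination whose degeneracy slope is not meridional.
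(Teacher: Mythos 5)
Your proposal follows essentially the same route as the paper: Gabai--Mosher supplies a very full essential lamination $\lambda$ with degeneracy slope $\delta$; the persistence of essential laminations under fillings with $\Delta(\gamma,\delta)\ge 2$, combined with the Gabai--Oertel fact that manifolds carrying essential laminations have universal cover $\mathbb{R}^3$ (hence infinite $\pi_1$), forces $\Delta(r,\delta)\le 1$; and Theorem~\ref{Thm1} plus the triangle inequality for rational numbers gives $|r-b|\le 1+(4g(F)-2)$. The only cosmetic differences are that you pass through elliptization (``universal cover is $S^3$'') rather than quoting \cite[Theorem 6.1]{GabaiOertel} directly, and that you needlessly restrict to $|\partial F|=1$ to get $4g(F)-2$, whereas the bound $\Delta(\beta,\delta)\le 4g(F)-2$ holds for any orientable essential $F$ by the computation at the end of the proof of Theorem~\ref{Thm1}.

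The obstacle you flag at the end --- the case $\delta=1/0$ --- is a genuine issue, and you should know that the paper's own proof does not resolve it either: the displayed chain of inequalities there has $r_2d_2$ and $d_2b_2$ in denominators, so it tacitly assumes $d_2\neq 0$. When $\delta$ is meridional, $\Delta(\gamma,\delta)\le 1$ only says $r$ is integral and $\Delta(\delta,\beta)\le 4g(F)-2$ only bounds the denominator of $b$; neither controls $|r-b|$, and no Pl\"ucker-type identity rescues the real-number triangle inequality since both distances to $\delta$ are infinite. This is not a hypothetical corner case --- Theorem~\ref{Thm2} shows the degeneracy slope is \emph{always} meridional for prime alternating knots --- so an honest write-up must either prove that $\delta$ can be chosen non-meridional for knots admitting finite or cyclic surgeries, or supply a separate argument in the meridional case. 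Your proposal correctly identifies this as unfinished; it is unfinished in the source as well.
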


\begin{proof}
Let $\gamma = r $ be a slope such that $K(r)$ is a lens space or $\pi_1 ( K(r) )$ is finite. 
We consider the degeneracy slope $\delta$ of a very full essential lamination $\lambda$ in the exterior $E(K)$ of $K$. 
As stated before, by \cite[Theorem A]{GabaiMosher}, 
there always exists a very full essential lamination in $E(K)$, and so the degeneracy slope $\delta$ must exist. 
See also \cite[{Theorem 6.47. (Gabai–Mosher)}]{CalegariBook}. 

When $\delta=1/0$, equivalently, the corresponding degeneracy locus is $n(1,0)$, if $n \ge 2$, the lamination $\lambda$ remains essential in the manifold obtained by surgery along any $\gamma \ne \delta =1/0$ by \cite{GabaiOertel}. 
Moreover, for any $\delta$, if $\Delta (\delta, \gamma) \ge 2$, $\lambda$ remains essential in the manifold obtained by surgery along $\gamma$ by \cite[Theorem 6.48. (Gabai–Mosher)]{CalegariBook}. 

Since the universal cover of a compact orientable 3-manifold containing an essential lamination is homeomorphic to $\mathbb{R}^3$ (\cite[Theorem 6.1]{GabaiOertel}), in particular, the fundamental group of the manifold must be infinite, any lens space or a 3-manifold with finite fundamental group cannot contain an essential lamination. 
This implies that, under the assumption that the degeneracy locus is not $1(1,0)$, $\delta \ne 1/0$ and $\Delta( \delta, \gamma ) \le 1$. 

Let $\beta = b = b_1/b_2$ be a non-meridional boundary slope of an orientable essential surface $F$ embedded in the exterior of $K$, and set $r = r_1/r_2$, $\delta=d_1/d_2$ with $d_2 \ne 0$. 
Then, we have the following from Theorem~\ref{Thm1}. 
\begin{align*}
| r - b | & \le 
| r_1/r_2 - d_1/d_2 | + | d_1/d_2 - b_1/b_2 | 
 =
\frac{\Delta( r_1/r_2 , d_1/d_2)}{r_2 d_2} 
+ \frac{\Delta ( d_1/d_2 , b_1/b_2 )}{d_2 b_2} \\
& \le
\Delta( \gamma , \delta ) 
+ \Delta ( \delta , \beta ) 
 \le 1 + 4g(F)-2 \leq 4 g(F) - 1.
\end{align*}

If we consider a minimal genus Seifert surface, which is known to be essential, as $F$, we obtain $| r | \leq 4 g(K) -1$. 
\end{proof}

See the following for related results; \cite[Theorem 1]{GordonLuecke1}, \cite[Theorem 1.1]{BoyerZhang}, \cite{CGLS}, and \cite{GordonLuecke2,GordonLuecke3}  about the bounds on the denominators of exceptional surgery slopes, and also, \cite[Theorem 1.1]{GodaTeragaito}, \cite[Corollary 8.5]{KronheimerMrowkaOzsvathSzabo}, \cite[Theorem 1]{Rasmussen} (for les space surgery), 
\cite[Proposition 1.3 (i)]{MatignionSayari03} (for reducible surgery), 
\cite[Theorem 1.2, Theorem 1.3]{Teragaito}, \cite[Theorem 1]{EudaveMunozGuzmanTristan}, 
\cite{IchiharaTeragaito1, IchiharaTeragaito2} (for toroidal surgery), and also 
\cite{Ichihara01, IchiharaRIMS}, \cite[Theorem 5.1(B)]{IchiharaOzawa02}, 
about the bounds on the exceptional surgery slopes in terms of the genera of knots.

\subsection*{Acknowledgments}
The auhtor would like to thank Tsuyoshi Kobayashi and Yoa'v Rieck for helpful conversations. 
He also thanks to Steven Boyer for letting him know the reference \cite{BoyerGordonHu24} regarding Theorem~\ref{Thm2} and the anonymous referee for a careful reading and helpful
comments.
This work was partially supported by JSPS KAKENHI Grant Number JP22K03301.

%--------%--------%--------%--------%--------%--------
\section{Distances between degeneracy slopes and boundary slopes}

We first prepare basic terminology on essential laminations and essential branched surfaces. 
See \cite{Calegari1, CalegariBook} for details, and also \cite[Definitions 1.1]{GabaiOertel}, \cite{Gabai}, and \cite{Wu}. 

A \emph{branched surface} $B$ in a 3-manifold $M$ is defined as a 2-complex with a $C^1$ combing along the 1-skeleton, giving it a well-defined tangent space at every point, and generic singularities (\cite[Definition 6.16]{CalegariBook}). 

By $N(B)$, we denote the normal bundle over $B$ in $M$ foliated by intervals transverse to $B$. 
For a branched surface $B$ in a 3-manifold $M$, its exterior $E(B)$ is defined as $E(B) = M - \mathrm{Int}\, N(B)$. 

If $B$ is a branched surface in $\mathrm{Int}\, M$, then $E(B)$ has a natural cusped manifold structure. 
That is, the \emph{horizontal boundary} of $E(B)$ is defined as $\partial_h E(B) = \partial_h  N(B) \cup \partial M$, and the \emph{vertical boundary} of $E(B)$ as $\partial_v E(B) =
\partial_v N(B)$, where $\partial_h N(B)$ and $\partial_v N(B)$ are the horizontal and vertical boundaries of $N(B)$.

% essential branched surface
It is proved by \cite[Proposition 4.5]{GabaiOertel} that every essential lamination is fully carried by a branched surface with the following properties, which is called an \emph{essential branched surface}. 
\begin{enumerate}
    \item $B$ has no disk of contact, i.e., no disk $D \subset N(B)$ such that $D$ is transverse to the $I$–fibers of $N(B)$ and $\partial D \subset \partial_v N(B)$.
    \item There is no Reeb component, i.e., $B$ does not carry a torus that bounds a solid torus in $M$.
    \item No component of $\partial_h N(B)$ is a sphere.
    \item $B$ fully carries an essential lamination. 
    \item $E(B) = M - \mathrm{Int}(N(B))$ contains no essential surface $F$ with $\chi_c(F) > 0$. This is equivalent to the following:
    \begin{enumerate}
        \item $M - B$ is irreducible,
        \item $\partial_h N(B)$ is incompressible in $E(B)$,
        \item $E(B)$ has no monogon, i.e., no disk $D \subset M - \mathrm{Int}(N(B))$ with $\partial D = D \cap N(B) = \alpha \cup  \beta$, where $\alpha \subset  \partial_v N(B)$ is in an interval fiber of $\partial_v N(B)$ and $\beta \subset \partial_h N(B)$.
    \end{enumerate}
\end{enumerate}
%See also \cite{Li}. 
Remark that the four conditions (1), (2), (3), (4) above are intrinsic, while the condition (5) is extrinsic.

Here, $\chi_c(F)$ denotes the \emph{cusped Euler characteristic} of a \emph{cusped surface} $F$ appearing as a complementary region of a train track on a surface or as a properly embedded surface in $E(B)$ such that $F \cap \partial_v N(B)$ consists of a set of essential arcs in $\partial_v N(B)$, each of which we call a \emph{cusp} on the boundary $\partial F$. 
Precisely, $\chi_c(F)$ is defined as $\chi_c(F) = \chi(F) - C(F)/2$, where $\chi(F)$ is the Euler characteristic of $F$, and $C(F)$ is the number of cusps on $\partial F$. 
This quantity has an additivity property:  
If $\tau$ is a train track on $F$, then $\chi(F) = \sum \chi_c(F_i)$, where $F_i$ are the components of $F - \mathrm{Int} N(\tau)$. 

Here, we recall the following fact. 

\begin{Lemma}[{\cite[Theorem 2.2]{Wu}}]\label{Lem_Wu}
Suppose $B$ is an essential branched surface which fully carries a lamination $\lambda$, and suppose $F$ is an essential surface in $M$. 
Then there is an essential branched surface $B'$ which is a splitting %$\lambda$-splitting 
of $B$, and a surface $F'$ isotopic to $F$, such that $F' \cap B'$ is an essential train track on $F'$.    
\end{Lemma}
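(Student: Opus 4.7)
The plan is to obtain $(B',F')$ by first placing $F$ in general position with respect to $B$, so that the intersection becomes a train track on $F$, and then successively removing inessential complementary regions of that train track by alternating isotopies of $F$ with splittings of $B$ along leaves of $\lambda$. Concretely, I would first make a small isotopy so that $F$ is transverse to both $B$ and the $I$-fibers of the regular neighborhood $N(B)$; by standard branched-surface theory, $\tau := F \cap B$ then becomes a train track on $F$, with its switches coming from the branch locus of $B$. The task is to modify $(B,F)$ until every complementary region $R$ of $\tau$ on $F$ has $\chi_c(R) \le 0$, so that $\tau$ is an \emph{essential train track} on $F'$. The problematic regions are precisely the smooth disks, the monogons, and (depending on the convention used for essentiality of a train track) the bigons.

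If some complementary region is inessential, I would remove it by one of three standard moves, chosen according to its cusp count. A smooth disk component supplies a compression disk for $\partial_h N(B) \cup F$ in $M$; using incompressibility of $F$ together with essentiality of $B$, in particular the incompressibility of $\partial_h N(B)$ in $E(B)$, such a component can be pushed off by an isotopy of $F$. A monogon yields either a monogon in $E(B)$, which is forbidden by condition (5) in the definition of essentiality of $B$, or a $\partial$-compression of $F$, which is forbidden by essentiality of $F$; so monogons are also removed by isotopy. A bigon, by contrast, records a splitting opportunity for $B$: pushing a leaf of $\lambda$ through the bigon yields a new branched surface $B'$ obtained from $B$ by a splitting along $\lambda$, and one verifies that $B'$ continues to fully carry $\lambda$. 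Termination is enforced by equipping the configuration with a lexicographic complexity, such as the pair consisting of the number of components of $F - \tau$ and the sum of $-\chi_c(R)$ taken over inessential regions, and checking that each of the three moves strictly decreases this quantity.

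The main obstacle is verifying that the bigon-splitting move preserves the five conditions defining an essential branched surface. One has to check that, after the splitting, $B'$ has no disk of contact, no Reeb component, no spherical component of $\partial_h N(B')$, and that $E(B')$ contains neither a monogon nor a compression disk for $\partial_h N(B')$; a putative such disk or monogon would pull back through the splitting to give either a compression of $\partial_h N(B)$ in $E(B)$ or an essential cusped surface in $E(B)$ with $\chi_c > 0$, each of which is ruled out by essentiality of $B$. Once this extrinsic check is in place, the inductive procedure terminates at the promised pair $(B', F')$ with $F' \cap B'$ an essential train track on $F'$, as claimed.
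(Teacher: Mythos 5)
First, a remark on the comparison itself: the paper does not prove this lemma. It is quoted from Wu \cite[Theorem 2.2]{Wu}, with only the added remark that the orientability hypothesis on $F$ can be dropped; the only hint the paper gives about the mechanism is that ``the number of switches must decrease when $\tau$ is inessential.'' So your proposal has to be measured against Wu's argument, whose overall skeleton — general position, then an alternation of isotopies of $F$ and splittings of $B$ along $\lambda$, with termination enforced by a complexity count — you have correctly reproduced.

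There is, however, a genuine gap: you have misassigned the role of the splitting. Under the definition of essential train track used here (no $0$-gons and no monogons; a bigon has $\chi_c=0$ and is perfectly admissible), your bigon-splitting move is never needed, so your argument reduces to the claim that every $0$-gon and every monogon of $\tau=F\cap B$ can be removed by an isotopy of $F$ alone — which would make the splitting in the conclusion superfluous, whereas it is the essential content of the statement. What you are overlooking is that a complementary region $D$ of $\tau$ on $F$ has interior disjoint from $B$ but \emph{not} from $N(B)$: $D$ may be trapped in the interstitial $I$-bundle regions of $N(B)$ between sheets of $\partial_h N(B)$ near the branch locus. In that position neither the incompressibility of $\partial_h N(B)$ in $E(B)$ nor the no-monogon condition (5c) applies to $D$, because $D$ does not lie in $E(B)$. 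The splitting of $B$ along leaves of $\lambda$ is precisely the move that enlarges the exterior so that $D$ becomes a genuine compressing disk or monogon for the new branched surface $B'$, at which point conditions (5a)--(5c) dispose of it; and it is this splitting that merges branches of $\tau$ and decreases the number of switches, which is Wu's termination measure. Your proposed complexity is also problematic: the sum of $-\chi_c(R)$ over inessential regions is negative and moves \emph{toward} zero as such regions disappear, and the number of components of $F-\tau$ is not obviously monotone under a splitting of $B$. Finally, the alternative you offer for a monogon (``a $\partial$-compression of $F$'') is a red herring here, since these monogon regions live in the interior of $M$ and involve $\partial_v N(B)$, not $\partial M$.
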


Here, a train track, which is a branched compact 1-manifold on a surface, is called \emph{essential} if it has no $0$-gons (i.e., disks) or monogons in its complement. 

We remark that the surfaces considered in \cite{Wu} are assumed to be orientable, but the lemma holds for essential non-orientable surfaces as well.  
See the proof of \cite[Theorem 2.2]{Wu}.

The above lemma is very useful in our situation, but it is not sufficient for our purpose, since it only considers the interior of the essential surface $F$; we need to consider the intersection $B \cap F$ in a neighborhood of the boundary of $F$. 

\begin{proof}[Proof of Theorem~\ref{Thm1}]
Let $M$ be a compact, orientable, irreducible $3$-manifold with incompressible torus boundary $\partial M$. 

Suppose that there exists an essential surface $F$ properly embedded in $M$ with boundary $\partial F$ having boundary slope $\beta$ on $\partial M$, and that there exists a very full essential lamination $\lambda$ in $M$ with degeneracy slope $\delta$ on $\partial M$. 
Let $\Delta$ denote the distance $\Delta(\beta, \delta)$. 

By \cite[Proposition 4.5]{GabaiOertel}, there exists an essential branched surface $B$ that fully carries the lamination $\lambda$. 
After a small perturbation, we may assume that $B \cap F$ is a train track, say $\tau$. 

\begin{Claim}
An essential branched surface $B'$ and an essential surface $F'$ are obtained from $B$ and $F$ by splittings and isotopies such that $F' \cap B'$ is an essential train track $\tau'$ on $F'$, and that, for the complementary region $X'$ of $N(B')$ containing $\partial M$, every component of $F' \cap X'$ that contains a component of $\partial F'$ is an essential annulus in $X'$. 
\end{Claim}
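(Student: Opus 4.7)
The plan is to combine Lemma~\ref{Lem_Wu} with additional boundary-side splittings that handle the behaviour of $F$ near $\partial M$, which Wu's lemma does not control. First I would apply Lemma~\ref{Lem_Wu} to the pair $(B, F)$ to produce an essential branched surface $B_1$ (a splitting of $B$) and a surface $F_1$ isotopic to $F$ such that $\tau_1 := F_1 \cap B_1$ is an essential train track on $F_1$. In general, a component $P$ of $F_1 \cap X_1$ meeting $\partial F_1$, where $X_1$ denotes the complementary region of $N(B_1)$ containing $\partial M$, may still carry cusps on $\partial_v N(B_1)$ and have several boundary circles on $\partial_h N(B_1)$, so further work is needed near $\partial F_1 \subset \partial M$.

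Next I would exploit the very-full hypothesis on $\lambda$: every complementary region of $\lambda$ is a (one-holed) ideal polygon bundle over $S^1$, and this structure propagates to $X_1$, with $\partial M$ forming part of its horizontal boundary. By the definition of degeneracy slope, $X_1$ already contains embedded annuli joining $\partial_h N(B_1)$ to $\partial M$ along curves of slope $\delta$, and these serve as a model for what each $P$ should become. For each such $P$, I would perform further splittings of $B_1$ along subsurfaces of $F_1$ lying near $\partial M$, in effect pushing slivers of $F_1$ into $N(B_1)$; this removes cusps of $P$ on $\partial_v N(B_1)$ and reduces the number of boundary circles of $P$ on $\partial_h N(B_1)$, while the essentiality of $B_1$ (no disks of contact, no monogons) and of $F_1$ (incompressibility and $\partial$-incompressibility) ensures $\chi_c(P) \le 0$ throughout. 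After finitely many such splittings, each $P$ has boundary consisting of the slope-$\beta$ curve on $\partial M$ together with a single circle on $\partial_h N(B')$ and no cusps. The incompressibility of $\partial M$ and essentiality of $F$ then force $P$ to be an annulus incompressible in $X'$, and by minimising intersections of $F$ with $N(B)$ we arrange that $P$ is not boundary-parallel in the cusped manifold $X'$, yielding an essential annulus as required.

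The main obstacle I expect is to carry out the boundary-side splittings without destroying the essential train track structure granted by Lemma~\ref{Lem_Wu}, nor the five essential-branched-surface conditions on $B'$, nor the essentiality of $F'$. Ensuring this simultaneously requires the additional splittings to be compatible with the existing train track $\tau_1$ (so that no new monogons or disks of contact are created in the interior of $F'$) and that the procedure terminates. I would set up a monovariant combining the total number of cusps of the boundary-touching pieces $P$ with the total number of their boundary circles on $\partial_h N(B_1)$, and argue that each splitting strictly decreases this monovariant while preserving essentiality of both $B'$ and $F'$; when the monovariant is minimal, each boundary-touching $P$ has exactly the desired annular form.
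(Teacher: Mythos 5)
Your overall outline (apply Lemma~\ref{Lem_Wu} first, then repair the pieces of $F$ near $\partial M$ using the very-full structure of the complementary region, with a decreasing complexity to guarantee termination) matches the paper's strategy, but the target configuration you aim for is wrong, and this is a genuine gap rather than a cosmetic one. You claim that after your boundary-side splittings each boundary-touching piece $P$ becomes an annulus with one boundary the slope-$\beta$ curve on $\partial M$, the other a single circle on $\partial_h N(B')$, and \emph{no cusps}. Such an annulus would join a $\beta$-curve on $\partial M$ directly to a curve on a leaf, which by the uniqueness of the degeneracy slope for a very full lamination forces $\beta = \delta$; when $\Delta(\beta,\delta)\ge 1$ this configuration simply does not exist, so no amount of splitting can reach it. Worse, if it did exist the theorem would collapse: the whole point of the subsequent argument (Claim~2) is that each boundary-touching annulus, viewed as a cusped surface, is a one-holed ideal polygon with at least $\Delta$ cusps on $\partial_v N(B')$, and these cusps are what drive the Euler characteristic estimate $\chi(F)\le(-\Delta/2)\cdot|\partial F|$. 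The cusps must be kept, not removed; the Claim only asks that $P$ be an essential (i.e., topologically vertical) annulus in $X'$, with its inner boundary still allowed to alternate between $\partial_h N(B')$ and $\partial_v N(B')$.

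The mechanism also differs from what actually works. The paper does not split $B$ further to absorb slivers of $F$; it uses the fact that, since $\lambda$ is very full, the complementary region $X_1$ containing $\partial M$ is homeomorphic to $T^2\times I$, so an incompressible, non-boundary-parallel piece $S$ of $F_1\cap X_1$ is essential iff it is a vertical annulus. If $S$ is inessential one finds a $\partial$-compressing disk meeting $\partial M$, and the $\partial$-compression is realized by an isotopy of $F_1$ (using essentiality of $F_1$, irreducibility of $M$, and incompressibility of $\partial M$); this strictly decreases the number of connected components of $\tau_1$, and re-applying Lemma~\ref{Lem_Wu} when $\tau_1$ becomes inessential decreases the number of switches, so the process terminates. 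Your proposed monovariant (cusps plus boundary circles on $\partial_h N(B_1)$) is not shown to decrease under a splitting of $B_1$, and since the cusp count is bounded below by $\Delta$ it cannot be driven to the state you describe. I would recommend replacing the ``remove all cusps'' goal with the $\partial$-compression argument in $X_1\cong T^2\times I$.
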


\begin{proof}
First, by Lemma~\ref{Lem_Wu}, we obtain an essential branched surface, say $B_1$, from $B$ by splitting, and obtain an essential surface, say $F_1$, by isotopies such that $F_1 \cap B_1$ is an essential train track $\tau_1$ on $F_1$, that is, $\tau_1$ has no complementary disks or monogons. 
We remark that in the procedure to obtain $\tau'$ from $\tau$, the number of switches must decrease when $\tau$ is inessential. 
See \cite[Proof of Theorem 2.2 and Figure 2.1]{Wu}. 

Let $X_1$ be the complementary region of $N(B_1)$ containing $\partial M$, and consider a component, say $S$, of $F_1 \cap X_1$ containing $\partial F_1$. 
By further isotopy of $F_1$ if necessary, $S$ is assumed to be incompressible and not boundary-parallel in $X_1$, for $F_1$ is essential in $M$ and $M$ is irreducible. 
Note that, since $\lambda$ is very full, $X$ is homeomorphic to $T^2 \times I$, and so is $X_1$. 
Thus, $S$ is essential in $X_1$ if and only if $S$ is a vertical annulus in $X_1 \simeq T^2 \times I$. 

Suppose otherwise; $S$ is inessential in $X_1$. 
Then, $S$ must be $\partial$-compressible in $X_1$. 
Since $S$ intersects with $\partial M$, and is not a vertical annulus, there exists a $\partial$-compressing disk for $S$ whose boundary consists of an arc $\alpha$ on $S$ and an arc on $\partial M$. 
% Note that the arc $\alpha$ in $F_1$ separates connected components of $\tau_1$ on $F_1$, for $F_1$ is $\partial$-incompressible. 
Note that, since $F_1$ is $\partial$-incompressible, the arc $\alpha$ cobounds a disk on $F_1$ together with an arc on $\partial F_1$, which may contain some connected components of $\tau_1$ on $F_1$. 

Perform the $\partial$-compression of $S$ in $X_1$, which is achieved by isotopy of $F_1$, since $F_1$ is essential, $M$ is irreducible, and $\partial M$ is incompressible.
Then, $\tau_1$ is transformed into a new one with a reduced number of connected components, since the arc $\alpha$ on $S$ separates connected components of $\tau_1$ on $F_1$. 
After this transformation, if $\tau_1$ becomes inessential, then, by using Lemma~\ref{Lem_Wu} again, split and isotope $B_1$ and $F_1$ so that the intersection gives an essential train track. 

Repeat this procedure as much as possible. 
We remark that the procedure must terminate, since the number of switches of the train tracks decreases in each step. 
Thus, we finally get the desired $B'$ and $F'$. 
That is, $F' \cap B'$ is an essential train track $\tau'$ on $F'$, and, for the complementary region $X'$ of $N(B')$ containing $\partial M$, every component of $F' \cap X'$ containing a component of  $\partial F'$ is an essential (vertical) annulus in $X'$. 
\end{proof}

\begin{Claim}
A surface $F''$, having the same property as $F$ and the properties achieved in Claim~1, is obtained from $F'$ by isotopy so that, as a cusped surface, every component of $F'' \cap X'$ containing a component of $\partial F''$ is a one-holed ideal polygon with at least $\Delta$ cusps. 
\end{Claim}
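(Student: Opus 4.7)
My plan is to exploit the very full structure of $\lambda$. The complementary region $X'$ of $N(B')$ containing $\partial M$ is modeled on a one-holed ideal polygon bundle over $S^1$, which induces on the outer torus $T = T^2 \times \{1\} \subset \partial X'$ (opposite $\partial M$ under the $X' \simeq T^2 \times I$ identification) a decomposition into alternating horizontal and vertical annuli, whose cores all have slope $\delta$. The slope-$\beta$ $T$-boundary circle of the vertical annulus furnished by Claim~1 must then cross each vertical annulus on $T$ at least $\Delta$ times, producing the required cusps.

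First I would describe the decomposition of $T$. Since $\lambda$ is very full and the complementary region $X$ containing $\partial M$ is of type (ii) with $p \geq 1$ ideal vertices, in the branched-surface truncation the torus $T \subset \partial X'$ is divided into at least one vertical annulus (a component of $\partial_v N(B') \cap \partial X'$ arising from an orbit of ideal vertices under the polygon-bundle monodromy) and at least one horizontal annulus (arising from a polygon side), arranged alternately around $T$. All these annulus cores are parallel to the $S^1$-direction of the bundle. This $S^1$-direction corresponds to slope $\delta$ on $\partial M$, since a cross-sectional annulus in $X'$ running in the bundle direction connects a leaf of $\lambda$ to a slope-$\delta$ curve on $\partial M$, matching the definition of the degeneracy locus. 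Via the $X' \simeq T^2 \times I$ identification, the vertical annulus cores on $T$ also have slope $\delta$.

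Next, I take a component $S$ of $F' \cap X'$ containing a boundary component of $F'$. By Claim~1, $S$ is an essential vertical annulus in $X' \simeq T^2 \times I$, so both boundary circles of $S$ have slope $\beta$. A small generic isotopy of $F'$ suffices to ensure the $T$-boundary circle of $S$ meets the branch locus transversely, so that $S \cap \partial_v N(B')$ is a disjoint union of cusp arcs. Let $F''$ be the resulting surface. The slope-$\beta$ $T$-boundary circle has algebraic intersection $\pm \Delta$ with each slope-$\delta$ vertical annulus core on $T$; each such core crossing forces a transverse arc of the circle through the corresponding vertical annulus. Each vertical annulus on $T$ therefore contributes at least $\Delta$ arcs, giving at least $\Delta$ cusps on the $T$-boundary of $S$ (summing over the $\geq 1$ vertical annuli). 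The $\partial M$-boundary of $S$ carries no cusps since $\partial_v N(B') \cap \partial M = \emptyset$. As a cusped surface, $S$ is an annulus with all cusps on a single boundary component, i.e., a one-holed ideal polygon with at least $\Delta$ cusps.

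The main technical point to justify is the identification of the slope on $T$ with $\delta$ via the bundle structure; once this is in place, the cusp count follows automatically from the algebraic intersection lower bound, and the necessary isotopy is only a generic transversality perturbation whose small support preserves essentiality of both $F'$ and the train track $\tau'' = F'' \cap B'$. Should an essentiality issue arise (e.g., a new bigon or monogon from the perturbation), one can invoke Lemma~1 together with a further splitting of $B'$ as in Claim~1 to restore essentiality before continuing, with the iteration terminating by the strict decrease in the number of switches.
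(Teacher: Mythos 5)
Your argument is correct, but it reaches the cusp count by a genuinely different mechanism than the paper. The paper fixes a single degeneracy annulus $A$ from $\partial_h N(B')$ to $\partial M$, isotopes $F'$ to minimize $|(F''\cap X')\cap A|$, notes that $S\cap A$ contains at least $\Delta$ arcs essential in both $S$ and $A$, and then shows each of the resulting $\geq\Delta$ rectangles of $S$ must contain a cusp: a cusp-free rectangle would yield a second leaf-to-$\partial M$ annulus with a different boundary slope, contradicting the uniqueness of the degeneracy slope forced by the one-holed ideal polygon bundle structure of $X'$. You instead push the entire computation onto the outer torus $T\subset\partial X'$, decomposing it into alternating horizontal and cusp annuli whose cores all have slope $\delta$ under the product structure, and then counting cusps as essential arcs of the slope-$\beta$ outer boundary circle of $S$ in the cusp annuli, bounded below by the algebraic intersection number $\pm\Delta$. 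Both proofs rest on the same structural input --- the bundle direction of $X'$ determines $\delta$ --- but you use it positively, to identify the slope of the cusp annulus cores, while the paper uses it only as a uniqueness statement inside a contradiction. Your route is more direct and makes $\Delta$ appear transparently as an intersection number; its cost is that you must actually establish the annulus decomposition of $T$ and the verticality of the degeneracy annulus (the ``main technical point'' you flag, which does check out: the leaf-side boundary of a degeneracy annulus is an essential curve in a horizontal annulus of $T$, hence parallel to all the cores, and is isotopic through the vertical annulus to the slope-$\delta$ curve on $\partial M$). One small point to add: arcs of $\partial S\cap\partial_v N(B')$ that are inessential in a cusp annulus are not cusps under the paper's definition of a cusped surface and should be removed by isotopy; this does not affect your lower bound, which is carried entirely by the essential arcs.
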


\begin{proof}
Since $\lambda$ is a very full essential lamination, there exists an annulus, say $A$, running from $\partial_h N(B')$ to $\partial M$. 
This annulus $A$ must be essential in $X'$, since it is vertical in $X' \simeq T^2 \times I$ and $\partial_h N(B')$ is incompressible in $X'$. 

We isotope $F'$, fixing $F' \cap \partial_v N(B')$, to obtain an essential surface $F''$ so that the number of components of the intersection $(F'' \cap X') \cap A$ becomes minimal, while keeping the properties achieved in Claim~1. 
Consider the intersection of $A$ with a component $S$ of $F'' \cap X'$ containing a component of $\partial F''$, which is an essential annulus in $X'$ by Claim~1. 

Then, $S \cap A$ consists of arcs, say $\alpha_i$'s, which are essential in both $S$ and $A$, and of inessential arcs, say $\alpha'_j$'s, on $S$ parallel to $\partial N(B')$. 
That is, each $\alpha'_j$ cobounds a bigon on $S$ together with an arc on $\partial N(B')$. 
Moreover, each of such bigons on $S$ contains at least one cusp, i.e., an arc of $S \cap \partial_v N(B')$, since bigons without cusps were removed by the isotopy of $F'$ in $X'$ by the essentiality of $S$, $A$, and $B'$. 
Note that the number of essential arcs $\alpha_i$'s is at least $\Delta$, since the intersection of each component of $\partial F''$ and the curve $A \cap \partial M$ is at least $\Delta$.  

We here claim that each rectangular region $R$ on $S$ bounded by two adjacent $\alpha_i$'s contains at least one cusp, i.e., an arc of $S \cap \partial_v N(B')$. 
If $R$ contains $\alpha'_j$'s, then, as observed above, it contains at least one cusp. 
Suppose otherwise that $R$ contains no $\alpha'_j$'s, and suppose $R$ contains no cusps to induce a contradiction. 
That is, $R \cap \partial N(B') \subset \partial_h N(B')$ holds. 
Note that, by the minimality of $S \cap A$, there are no bigons on $\partial M$ bounded by arcs on $\partial S$ and $\partial A$.  
In this situation, by using $R$, we find another essential annulus $A'$ in $X'$ whose one boundary component lies on $\partial M$ and the other on $\partial_h N(B')$ as follows. 
Consider the arcs $R \cap A$ on $A$, which is a pair of essential arcs $\alpha_i$'s on $A$. They cobounds a rectangular region $R'$ on $A$. (Actually there are two such regions, but either one can be used.) 
The union $R \cup R'$ gives a vertical, thus essential, annulus $A'$ in $X'$ whose one boundary component lies on $\partial M$ and the other on $\partial_h N(B')$, since $R \cap \partial N(B') \subset \partial_h N(B')$. 
By construction and the minimality of $S \cap A$, this $A'$ has a different boundary slope on $\partial M$ from that of $A$. 
However, this contradicts the uniqueness of the degeneracy slope for the very full essential lamination $\lambda$, which follows from the fact that $X'$ is a one-holed ideal polygon bundle over $S^1$. 
See also \cite[Remark 8.6]{Gabai}. 

Since the number of arcs $\alpha_i$'s is at least $\Delta$, it follows that the surface $F''$ obtained from $F'$ by the isotopies, which has the same property as $F$ and has the properties achieved in Claim~1, satisfies that, as a cusped surface, every component of $F'' \cap X'$ containing $\partial F''$ is a one-holed ideal polygon with at least $\Delta$ cusps.  
\end{proof}

Now we consider the cusped Euler characteristic $\chi_c(F'')$ of $F''$, which is equal to $\chi_c(F) = \chi(F)$.  
By additivity, for $X'$ in the proof of Claim~2, $\chi_c(F'')$ is equal to the sum of $\chi_c(F'' \cap X')$ and the cusped Euler characteristic of the remaining subsurface.  
By essentiality of $\tau'$ on $F''$, the latter must be non-positive.  
Thus, $\chi_c(F'') \leq \chi_c(F'' \cap X')$ holds.  

Since every component of $F'' \cap X'$ containing a component of $\partial F''$ is a one-holed ideal polygon with at least $\Delta$ cusps, the cusped Euler characteristic of each component is at most $-\Delta/2$.  
This implies that $\chi_c(F'' \cap X') \leq (-\Delta/2) \cdot |\partial F|$, and hence,  
\[
\chi(F) = \chi_c(F'') \leq \chi_c(F'' \cap X') \leq (-\Delta/2) \cdot |\partial F|.
\]

As a consequence, we obtain  
\[
\Delta(\beta, \delta) \leq 2 \frac{-\chi(F)}{ |\partial F|}.
\]

If $F$ is orientable, since $ \chi(F) = 2 - 2 g(F) - |\partial F| $ and $|\partial F| \ge 1$, we have;
\[
\Delta ( \beta , \delta ) 
\le 2 \frac{- \chi(F)}{|\partial F|}
%= 2 \frac{ 2 g(F) + |\partial F| - 2}{|\partial F|} 
 = \frac{ 4 g(F) - 4}{|\partial F|} + 2 
\le 4 g(F) -2. 
\]
\end{proof}

%--------%--------%--------%--------%--------%--------
%--------%--------%--------%--------%--------%--------
%--------%--------%--------%--------%--------%--------
\section{Applications}\label{sec;applications}

In this section, we provide proofs of the three theorems, Theorem~\ref{Thm2}, \ref{Thm3}, and \ref{Thm4} stated in Section~1.

%--------%--------%--------%--------%--------%--------
\subsection{Degeneracy slopes for alternating knots}

\begin{proof}[Proof of Theorem~\ref{Thm2}]
%Every degeneracy slope of a very full essential lamination in a non-tirivial prime alternating knot exterior in the 3-sphere is meridional. 
Let $K$ be a non-trivial prime alternating knot in $S^3$ with the exterior $E(K)$ in $S^3$. 
Then, by \cite{Menasco}, $E(K)$ is a compact, orientable, irreducible, atoroidal 3-manifold with incompressible torus boundary. 
Thus, for a degeneracy slope $\delta$ of a very full essential lamination in $E(K)$, Theorem~\ref{Thm1} is applicable to $E(K)$. 

Suppose, for a contradiction, that $\delta$ is non-meridional. 
Then, since the meridional surgery on $K$ yields $S^3$, $\delta$ is integral, i.e., $\delta = p/1$ for some integer $p$, by \cite[{Theorem 1.6(1)}]{Wu} plus the first few lines from the proof of \cite[Theorem 2.5]{Wu}. 

Consider the checkerboard surfaces $F_1$, $F_2$ for the reduced alternating diagram of $K$.
These are known to be essential by \cite{Aumann, DelmanRoberts}. 
Let $r_1$ and $r_2$ denote the boundary slopes of $F_1$ and $F_2$, which are both integral slopes. 
By simple calculations, 
% Curtis-Taylor, Prop.2.5
we have 
\[
\Delta(r_1, r_2) = |r_1 - r_2| = 2\,\mathrm{cr}(K)
, \quad 
\chi(F_1) + \chi(F_2) = 2 - \mathrm{cr}(K).
\] 
Here, $\mathrm{cr}(K)$ denotes the minimal crossing number of $K$, which is actually equal to the number of crossings in the reduced alternating diagram. 
For example, see \cite{IchiharaMizushima2}.
Hence, the following holds;
\[
| r_1 - r_2 | = 2\,( (-\chi (F_1) )+ ( -\chi (F_2) ) ) + 4 .
\]

On the other hand, by Theorem~\ref{Thm1}, 
together with each $F_i$ having connected boundary, 
we must have 
\[
| r_i - p | = \Delta ( r_i/1 , p/1 ) \leq 2 \, (-\chi (F_i) ).
\]
This implies that 
\[
| r_1 - r_2 | \le 
| r_1 - p | + | p - r_2 | 
\leq 2 \big( (-\chi (F_1) ) + (-\chi (F_2) ) \big),
\]
contradicting the above.
\end{proof}

%--------%--------%--------%--------%--------%--------
\subsection{Bounds on boundary slopes}

% Let $K$ be a hyperbolic knot in an integral homology sphere. 
% Then one of the following must hold;
% \begin{itemize}
% \item[(i)]
% for any essential surface embedded in the exterior of $K$ with non-meridional boundary slope $p/q$, $|q| \le 2 g(F)$ holds.
% \item[(ii)]
% for any pair of orientable essential surfaces $F$ and $F'$ embedded in the exterior of $K$ with non-meridional  boundary slopes $r$ and $r'$, respectively, $|r-r'| \le 4 (g(F) + g(F') -1)$ holds.
% \end{itemize}

\begin{proof}[Proof of Theorem~\ref{Thm3}]
Let $K$ be a hyperbolic knot with the exterior $E(K)$ in an integral homology sphere $\Sigma$. 
We fix a meridian-preferred longitude system on $\partial E(K)$. 

Since $K$ is hyperbolic, the exterior $E(K)$ is 
a compact, orientable, irreducible, atoroidal 3-manifold with incompressible torus boundary. 
Then, as stated in the Introduction, by \cite[Theorem A]{GabaiMosher} %Gabai-Mosher 
and \cite[{Theorem 6.47. (Gabai–Mosher)}]{CalegariBook}, 
the exterior $E(K)$ contains a very full essential lamination. 
We consider its degeneracy slope $\delta$ on the boundary $\partial E(K)$. 

Suppose that $\delta$ is meridional, that is, $\delta=1/0$. 
Let $\beta = p/q$ with $q \neq 0$ be the non-meridional boundary slope on $\partial E(K)$ of an essential surface $F$ embedded in $E(K)$. 
Then, by Theorem~\ref{Thm1}, we have
\[
\Delta (\delta,\beta) = \Delta (1/0,p/q) = |q| \le 2 \frac{- \chi(F)}{|\partial F|}. 
\]
Note that if $F$ is orientable, $|\partial F| \ge 2$ holds. %, since $\Sigma$ is an integral homology sphere. 
Because the image of the boundary map $\partial : H_2 ( E(K) , \partial E(K) ) \to H_1 ( \partial E(K))$ is a Lagrangian, that is, any two element in the image intersect algebraically zero times, hence when $F$ is orientable, we should have $\partial (F) = 0 \in H_1 ( \partial E(K))$ and so $| \partial F|$ is even.

Thus, we obtain 
\[
|q| \le 
2 \frac{- \chi(F)}{|\partial F|} 
= \frac{4 g(F) - 4}{|\partial F|} + 2 \le 2 g(F). 
\]

Suppose next that $\delta = d_1/d_2$ is non-meridional, i.e., $d_2 \neq 0$. 
Let $r = r_1/r_2$ and $r' = r'_1/r'_2$ be non-meridional boundary slopes of orientable essential surfaces $F$ and $F'$ embedded in $E(K)$, respectively. 
Then, by Theorem~\ref{Thm1}, the following holds:
\begin{align*}
|r_1/r_2 - r'_1/r'_2| &\le 
|r_1/r_2 - d_1/d_2| + |d_1/d_2 - r'_1/r'_2| \\
&= \frac{\Delta(r_1/r_2, d_1/d_2)}{r_2 d_2} + \frac{\Delta(d_1/d_2, r'_1/r'_2)}{d_2 r'_2} \\
&\le \Delta(r_1/r_2, d_1/d_2) + \Delta(d_1/d_2, r'_1/r'_2) \\
&\le (4g(F) - 2) + (4g(F') - 2) = 4(g(F) + g(F') - 1).
\end{align*}

\end{proof}

%--------%--------%--------%--------%--------%--------
\subsection{Bounds on exceptional surgery slopes}

% Let $p/q$ be a non-trivial exceptional surgery slope for a hyperbolic knot in an integral homology sphere. 
% Then at least one of the following always holds: (i) $|q| \le 2$, or, (ii) $| p/q - r | \le 4 g(F)$ for every essential surface $F$ with boundary slope $r$, in particular, $|p/q| \le 4 g(K)$. 

\begin{proof}[Proof of Theorem~\ref{Thm4}]
Let $K$ be a hyperbolic knot with exterior $E(K)$ in an integral homology sphere. We fix a meridian-preferred longitude system on $\partial E(K)$.

As in the proof of Theorem~\ref{Thm3}, since $K$ is hyperbolic, the exterior $E(K)$ is a compact, orientable, irreducible, atoroidal 3-manifold with incompressible torus boundary.
Thus, by \cite[Theorem A]{GabaiMosher} %Gabai-Mosher 
and \cite[Theorem 6.47 (Gabai–Mosher)]{CalegariBook}, the exterior $E(K)$ contains a very full essential lamination. We consider its degeneracy slope $\delta$ on the boundary $\partial E(K)$.

Let $\gamma = p/q$ be a non-trivial exceptional surgery slope for $K$. 
Then, by \cite[Theorem 2.5]{Wu} together with the Geometrization of 3-manifolds, we have $\Delta(\gamma, \delta) \le 2$.

\noindent
(i) In the case that $\delta$ is meridional, that is, $\delta=1/0$, then by Theorem~\ref{Thm1}, we have
\[
\Delta (\gamma, \delta) = \Delta (p/q, 1/0) = |q| \le 2 .
\]

\noindent
(ii) Consider the case that $\delta = d_1/d_2$ is non-meridional, i.e., $d_2 \ne 0$. 

For an orientable essential surface $F$ with non-meridional boundary slope $r = r_1/r_2$, applying Theorem~\ref{Thm1}, we obtain 
\begin{align*}
| p/q - r_1/r_2 | & \le 
| p/q - d_1/d_2 | + | d_1/d_2 - r_1/r_2 | \\
& =
\frac{\Delta( p/q , d_1/d_2 )}{q d_2} 
+ \frac{\Delta ( d_1/d_2 , r_1/r_2 )}{d_2 r_2} \\
& \le
\Delta( p/q , d_1/d_2 ) + \Delta ( d_1/d_2 , r_1/r_2 ) \\
& \le 2 + (4g(F)-2) \leq 4 g(F) .
\end{align*}

In particular, by considering the longitudinal slope, that is, $0/1$, of the minimal genus Seifert surface of genus $g(K)$, which is known to be essential, we have 
\[
|p/q| \le 4 g(K) .
\]
\end{proof}

%-------

%----------------------------------
\bibliographystyle{amsalpha}

\bibliography{ref}

\end{document}